\def\NZQ{\mathbb}               
\def\NN{{\NZQ N}}
\def\frk{\mathfrak}               
\def\Phi{{\frk N}}
\def\Pc{{\mathcal P}}
\def\I{{\mathcal I}}
\def\opn#1#2{\def#1{\operatorname{#2}}} 
\opn\chara{char} \opn\length{\ell} \opn\pd{pd} \opn\rk{rk}
\opn\projdim{proj\,dim} \opn\injdim{inj\,dim} \opn\rank{rank}
\opn\depth{depth} \opn\grade{grade} \opn\height{height}
\opn\size{size}
\opn\embdim{emb\,dim} \opn\codim{codim}
\opn\Tr{Tr} \opn\bigrank{big\,rank}
\opn\superheight{superheight}\opn\lcm{lcm}
\opn\trdeg{tr\,deg}
\opn\reg{reg} \opn\lreg{lreg} \opn\ini{in} \opn\lpd{lpd}
\opn\size{size}\opn{\mult}{mult}
\opn{\Cl}{Cl}
\opn\div{div} \opn\Div{Div} \opn\cl{cl} \opn\Cl{Cl}
\opn\Spec{Spec} \opn\Supp{Supp} \opn\supp{supp} \opn\Sing{Sing}
\opn\Ass{Ass} \opn\Min{Min} \opn\cl{cl}
\opn\Ann{Ann} \opn\Rad{Rad} \opn\Soc{Soc}
\opn\Syz{Syz} \opn\Im{Im} \opn\Ker{Ker} \opn\Coker{Coker}
\opn\Am{Am} \opn\Hom{Hom} \opn\Tor{Tor} \opn\Ext{Ext}
\opn\End{End} \opn\Aut{Aut} \opn\id{id} \opn\ini{in}
\opn\nat{nat}
\opn\pff{pf}
\opn\Pf{Pf} \opn\GL{GL} \opn\SL{SL} \opn\mod{mod} \opn\ord{ord}
\opn\Gin{Gin}
\opn\Hilb{Hilb}\opn\adeg{adeg}\opn\std{std}\opn\ip{infpt}
\opn\Pol{Pol}
\opn\sat{sat}
\opn\Var{Var}
\opn\Gen{Gen}
\opn\lex{lex}
\opn\div{div}
\opn\aff{aff} \opn\con{conv} \opn\relint{relint} \opn\st{st}
\opn\lk{lk} \opn\cn{cn} \opn\core{core} \opn\vol{vol}
\opn\link{link} \opn\star{star}
\opn\gr{gr}
\def\Ic{{\mathcal I}}
\def\Cc{{\mathcal C}}
\def\pot#1#2{#1[\kern-0.28ex[#2]\kern-0.28ex]}
\opn\dirlim{\underrightarrow{\lim}}
\opn\inivlim{\underleftarrow{\lim}}
\def\Implies{\ifmmode\Longrightarrow \else
        \unskip${}\Longrightarrow{}$\ignorespaces\fi}
\def\implies{\ifmmode\Rightarrow \else
        \unskip${}\Rightarrow{}$\ignorespaces\fi}
\def\iff{\ifmmode\Longleftrightarrow \else
        \unskip${}\Longleftrightarrow{}$\ignorespaces\fi}
\def\NZQ{\mathbb}        
\def\NN{{\NZQ N}}
\def\frk{\mathfrak}        
\def\Phi{{\frk N}}
\def \P{{\mathcal P}}
\def\C{{\mathcal C}}
\newtheorem{Theorem}{Theorem}[section]
\newtheorem{Lemma}[Theorem]{Lemma}
\newtheorem{Corollary}[Theorem]{Corollary}
\theoremstyle{definition}
\newtheorem{Example}[Theorem]{Example}
\begin{document}
\title{Simple polyominoes are prime}
\author {Ayesha Asloob Qureshi, Takafumi Shibuta and Akihiro Shikama}

\thanks{}

\subjclass{13C05, 05E40, 13P10.}
\keywords{polyominoes, toric ideals}

\address{Ayesha Asloob Qureshi, Department of Pure and Applied Mathematics, Graduate School of Information Science and Technology,
Osaka University, Toyonaka, Osaka 560-0043, Japan}
\email{ayesqi@gmail.com}
\address{Takafumi Shibuta, Institute of Mathematics for Industry, Kyushu University, Fukuoka 819-0395, Japan}
\email{shibuta@imi.kyushu-u.ac.jp}
\address{Akihiro Shikama, Department of Pure and Applied Mathematics, Graduate School of Information Science and Technology,
Osaka University, Toyonaka, Osaka 560-0043, Japan}
\email{a-shikama@cr.math.sci.osaka-u.ac.jp}

\maketitle
\begin{abstract}
In this paper we show that polyomino ideal of a simple polyomino coincides with the toric ideal of a  weakly chordal bipartite  graph and hence it has a quadratic Gr\"obner basis with respect to a suitable monomial order.
\end{abstract}

\section*{Introduction}
Polyominoes are two dimensional objects which are originally rooted in recreational mathematics and combinatorics. They have been widely discussed in connection with tiling problems of the plane. Typically, a polyomino is plane figure  obtained by joining squares of equal sizes, which are known as cells. In connection with  commutative algebra, polyominoes were first discussed in \cite{Q} by assigning each polyomino the ideal of its inner 2-minors or the {\em polyomino ideal}. The study of ideal of $t$-minors of an $m \times n$ matrix is a classical subject in commutative algebra.  The class of polyomino ideal widely generalizes the class of ideals of 2-minors of $m \times n $ matrix as well as the ideal of inner 2-minors attached to a two-sided ladder.

Let $\P$ be a polyomino and $K$ be a field. We denote by $I_{\P}$, the polyomino ideal attached to $\P$, in a suitable polynomial ring over $K$. The residue class ring defined by $I_{\Pc}$ is denoted by $K[\P]$. It is natural to investigate the algebraic properties of $I_{\P}$  depending on shape of $\P$. In \cite{Q}, it was shown that for a convex polyomino, the residue ring $K[\P]$ is a normal Cohen-Macaualay domain. More generally, it was also shown that polyomino ideals attached to a row or column convex polyomino is also a prime ideal. Later in \cite{EHH}, a classification of the convex polyominoes whose polyomino ideals are linearly related is given. For some special classes of polyominoes, the regularirty of polyomino ideal is discussed in \cite{ERQ}.

In \cite{Q}, it was conjectured that polyomino ideal attached to a simple polyomino is prime ideal.  Roughly speaking, a simple polyomino is a polyomino without 'holes'. This conejcture was further studied in \cite{HQS}, where authors introduced {\em balanced} polyominoes and proved that polyomino ideals attached to balanced polyominoes are prime. They expected that all simple polyominoes are balanced, which would then prove simple polyominoes are prime. This question was further discussed in \cite{HM}, where authors proved that balanced and simple polyominoes are equivalent. Independent of the proofs given in \cite{HM}, in this paper we show that simple polyominoes are prime by identifying the attached residue class ring $K[\P]$ with the edge rings of weakly chordal graphs. Moreover, from \cite{OH1}, it is known that toric ideal of the  edge ring of weakly chordal bipartite graph has a quadratic Gr\"obner basis with respect to a suitable monomial order, which implies that $K[\P]$ is Koszul.

\section{Polyominoes and Polyomino ideals}
First we recall some definitions and notation from \cite{Q}.  Given $a=(i,j)$ and $b=(k,l)$ in $\NN^2$ we write  $a\leq b$ if $i\leq k$ and $j\leq l$. The set $[a,b]=\{c\in\NN^2\:\; a\leq c\leq b\}$ is called an {\em interval}. If $i <k$ and $j<l$, then the elements $a$ and $b$ are called {\em diagonal} corners and $(i,l)$ and $(k,j)$ are called {\em anti-diagonal}
 corners of $[a,b]$. An interval of the from $C=[a,a+(1,1)]$ is called a {\em cell} (with left lower corner $a$). The elements (corners) $a, a+(0,1), a+(1,0), a+(1,1)$ of $[a,a+(1,1)]$ are called the {\em vertices} of $C$. The sets $\{a,a+(1,0)\}, \{a,a+(0,1)\},  \{a+(1,0),  a+(1,1)\}$ and   $\{a+(0,1),  a+(1,1)\}$ are called the {\em edges} of $C$. We denote the set of edge of $C$ by $E(C)$.

Let $\Pc$ be a finite collection of cells of $\NN^2$. The the vertex set of $\Pc$, denoted by $V(\Pc)$ is given by $V(\Pc)=\bigcup_{C \in \Pc} V(C)$.  The edge set of $\Pc$, denoted by $E(\Pc)$ is given by $E(\Pc)=\bigcup_{C \in \Pc} E(C)$. Let $C$ and $D$ be two cells of $\Pc$. Then $C$ and $D$ are said to be {\em connected}, if there is a sequence of cells $\mathcal{C}:C= C_1, \ldots, C_m =D$ of $\Pc$  such that $C_i \cap C_{i+1}$ is an edge of $C_i$ for $i=1, \ldots, m-1$. If in addition, $C_i \neq C_j$ for all $i \neq j$, then $\mathcal{C}$ is called a {\em path} (connecting $C$ and $D$). The collection of cells $\Pc$ is called a {\em polyomino} if any two cells of  $\Pc$ are connected, see Figure~\ref{polyomino}.

 \begin{figure}[htbp]
\includegraphics[width = 3cm]{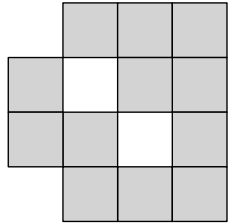}
\caption{polyomino}\label{polyomino}
\end{figure}

Let $\Pc$ be a polyomino, and let $K$ be a field. We denote by $S$ the polynomial ring over $K$ with variables $x_{ij}$ with $(i,j)\in V(\Pc)$.  Following \cite{Q} a $2$-minor $x_{ij}x_{kl}-x_{il}x_{kj}\in S$ is called an {\em inner minor} of $\Pc$ if all the cells $[(r,s),(r+1,s+1)]$ with $i\leq r\leq k-1$ and $j\leq s\leq l-1$ belong to $\Pc$. In that case the interval $[(i,j),(k,l)]$ is called an {\em inner interval} of $\Pc$. The ideal $I_\Pc\subset S$ generated by all inner minors of $\Pc$ is called the {\em polyomino ideal} of $\Pc$. We also set $K[\Pc]=S/I_\Pc$.

Let $\Pc$ be a polyomino. Following \cite{HQS},  an interval $[a,b]$ with $a=(i,j)$ and $b=(k,l)$ is called a {\em horizontal  edge interval} of $\Pc$  if $j=l$ and  the sets $\{(r,j),(r+1,j\}$ for $r=i,\ldots,k-1$ are edges of cells of $\Pc$. If a horizontal edge interval of $\Pc$ is not strictly contained in any other horizontal edge interval of $\Pc$, then we call it {\em maximal} horizontal edge interval. Similarly one defines vertical edge intervals and maximal vertical edge intervals of $\Pc$.

Let $\{V_1,\ldots,V_m\}$ be the set of maximal vertical edge intervals and $\{H_1,\ldots,H_n\}$ be the set of maximal horizontal edge intervals of $\P$. We denote by $G(\P)$, the associated bipartite graph of $\P$ with vertex set $\{v_1,\ldots,v_m\} \bigsqcup \{ h_1,\ldots,h_n\}$ and the edge set defined as follows
\[
E(G(\P)) = \{\{v_i,h_j\} \mid V_i \cap H_j \in V( \P)\}.
\]

\begin{Example}
The Figure~\ref{interval} shows a polyomino $\P$ with maximal vertical and maximal horizontal edge intervals labelled as $\{V_1, \ldots, V_4\}$ and $\{H_1, \ldots, H_4\}$ respectively, and Figure~\ref{graph}  shows the associated bipartite graph $G(\P)$ of $\Pc$.

\begin{figure}[htbp]
\includegraphics[width = 3cm]{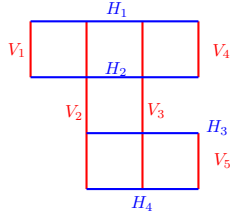}
\caption{maximal intervals of $\P$}\label{interval}
\end{figure}
\begin{figure}[htbp]
\begin{center}
\includegraphics[width=80mm]{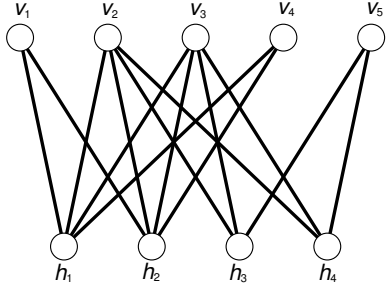}
\end{center}
\caption{associate bipartite graph of $\P$}\label{graph}
\end{figure}

\end{Example}

Let $S$ be the polynomial ring over field $K$ with variables $x_{ij}$ with $(i,j) \in V(\P)$. Note that $|V_p \cap H_q| \leq 1$. If $V_p \cap H_q = \{(i,j)\}$, then we may write $x_{ij} = x_{V_p \cap H_q}$, when required. To each cycle $\C: v_{i_1},h_{j_1}, v_{i_2}, h_{j_2}, \ldots, v_{i_r}, h_{j_r}$ in $G(\P)$, we associate a binomial in $S$ given by
$f_{\C}= x_{V_{i_1}\cap H_{j_1}} \ldots x_{V_{i_r}\cap H_{j_r}} -  x_{V_{i_2}\cap H_{j_1}} \ldots  x_{V_{i_1}\cap H_{j_r}}$. 

We recall the definition of a cycle in $\P$ from \cite{HQS}. A sequence of  vertices $\Cc_{\P}= a_1,a_2, \ldots, a_m$ in  $V(\Pc)$ with $a_m = a_1$ and such that $a_i \neq a_j$ for all  $1 \leq i < j \leq m-1$ is a called a {\em cycle}  in $\Pc$ if the following conditions hold:
\begin{enumerate}
\item[(i)]  $[a_i, a_{i+1}] $ is a horizontal or vertical edge interval of $\Pc$ for all $i= 1, \ldots, m-1$;
\item[(ii)] for $i=1, \ldots, m$ one has: if $[a_i, a_{i+1}]$ is a horizontal edge interval of $\Pc$, then  $[a_{i+1}, a_{i+2}]$  is a vertical edge interval of $\Pc$ and vice versa. Here,  $a_{m+1} = a_2$.
\end{enumerate}

We set $V(\C_{\Pc})= \{ a_1, \ldots, a_m\}$. Given a cycle $\Cc_{\P}$ in $\P$, we attach to $\Cc_{\P}$ the binomial
\[
f_{\C_{\P}} = \prod_{i=1}^{(m-1)/2} x_{a_{2i-1}} - \prod_{i=1}^{(m-1)/2} x_{a_{2i}}
\]

Moreover, we call a cycle in $\P$ is {\em primitive} if each maximal interval of $\P$ contains at most two vertices of $\Cc_{\Pc}$.

Note that if $\C: v_{i_1},h_{j_1}, v_{i_2}, h_{j_2}, \ldots, v_{i_r}, h_{j_r}$ defines a cycle in $G(\P)$ then the sequence of vertices $\C_{\P}: V_{i_1}\cap H_{j_1},  V_{i_2}\cap H_{j_1},  V_{i_2}\cap H_{j_2}, \ldots,  V_{i_r}\cap H_{j_r}, V_{i_1}\cap H_{j_r}$ is a primitive cycle in $\P$ and vice versa. Also, $f_{C}= f_{\C_{\P}} $.

We set $K[G(\P)]=K[v_ph_q \mid \{p,q\} \in E(G(\P))] \subset T= K[v_1, \ldots,v_m, h_1, \ldots, h_n]$. The subalgebra $K[G(\P)]$ is called the edge ring of $G(\P)$. Let $\varphi : S \rightarrow T$ be the surjective K-algebra homomorphism defined by $\varphi(x_{ij} ) = v_ph_q$, where $\{(i,j)\} = V_p \cap H_q$. We denote by  $J_{\P}$, the toric ideal of $K[G(\P)]$. It is known from \cite{OH2}, that $J_{\P}$ is generated by the binomials associated with cycles in $G(\P)$.

\section{Simple polyominoes are prime}

Let $\Pc$ be a polyomino and let $[a,b]$ an interval with the property that $\Pc\subset [a,b]$.  According to \cite{Q}, a polyomino $\Pc$  is called {\em simple}, if for any cell $C$ not belonging to $\Pc$ there exists a path $C=C_1,C_2,\ldots,C_m=D$ with  $C_i\not \in \Pc$ for $i=1,\ldots,m$ and such that $D$ is not a cell of  $[a, b]$. 
For example, the polyomino illustrated in Figure~\ref{polyomino} is not simple but the one in Figure~\ref{simple} is simple.
It is conjectured in \cite{Q} that $I_\Pc$ is a prime ideal if $\Pc$ is simple.
 
\begin{figure}[htbp]
\includegraphics[width = 3cm]{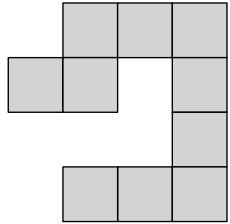}
\caption{simple polyomino}\label{simple}
\end{figure}

We recall from graph theory that a graph is called weakly chordal if every cycle of length greater than 4 has a chord. In order to prove following lemma, we define some notation. We call a cycle $\C_{\P}: V_{i_1}\cap H_{j_1},  V_{i_2}\cap H_{j_1},  V_{i_2}\cap H_{j_2}, \ldots,  V_{i_r}\cap H_{j_r}, V_{i_1}\cap H_{j_r}$ in $\P$ has a self crossing if for some $i_p \in \{i_1, \ldots, i_{r-1}\}$ and $j_q \in \{j_1, \ldots, j_{s-1}\}$ there exist vertices $a=V_{i_p} \cap H_{i_p},  b=V_{i_p} \cap H_{i_{p+1}}, c=V_{i_s} \cap H_{i_s}, d=V_{i_{s+1}} \cap H_{i_s}$ such that there exists a vertex $e \notin \{a,b,c,d\}$ such that $e \in [a,b] \cap [c,d]$. In this situation $e = V_{i_p} \cap H_{i_s}$. If $\C$ is the associated cycle in $G(\P)$ then It also shows that $\{v_{i_p}, h_{i_s}\} \in E(G(\P))$ and it gives us a chord in $\C$. 

Let  $\C_{\P}:a_1, a_2, \ldots, a_r$ be a cycle in $\P$ which does not have any self crossing. Then we call  the  area bounded by edge intervals $[a_i, a_{i+1}]$ and $[a_r, a_1]$ for $i \in \{1, r-1\}$, the {\em interior } of $\C_{\P}$. Moreover, we call a cell $C$ is an {\em interior} cell of $\C_\P$ if $C$ belongs to the interior of $\C_{\P}$.

\begin{Lemma}
Let $\P$ be a simple polyomino. Then the graph $G(\P)$ is weakly chordal.
\end{Lemma}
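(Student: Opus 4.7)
The plan is to take any cycle $\C$ in $G(\P)$ of length $2r \geq 6$ and exhibit a chord. Write $\C_\P$ for the associated primitive cycle in $\P$. The discussion immediately preceding the lemma already handles the case where $\C_\P$ has a self crossing: the crossing point $e$ is then a vertex of $\P$ lying on a pair of non-adjacent cycle intervals and hence produces a chord of $\C$. So I may assume $\C_\P$ has no self crossing, in which case it is the boundary of a simple axis-parallel polygon. Let $R$ denote its interior, regarded as a union of unit cells.

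The first key reduction is to show $R\subseteq\P$. Suppose some cell $C_0\in R$ is not in $\P$. Since $\P$ is simple and $C_0$ lies in the bounding box of $\P$, there is a path $C_0,C_1,\dots,C_m$ of cells not in $\P$ exiting that bounding box. Because $C_0$ is inside $\C_\P$ and $C_m$ is outside, some consecutive pair $C_i,C_{i+1}$ meets along a unit edge $e$ of $\C_\P$. But $e\in E(\P)$, so $e$ is an edge of some cell of $\P$; one of the two cells of the pair must then belong to $\P$, contradicting the choice of path. Hence $R\subseteq\P$.

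The second step uses $2r\geq 6$, so the rectilinear polygon bounded by $\C_\P$ is not a rectangle and admits at least one reflex corner $v=(x_0,y_0)$, at which three of the four surrounding unit cells lie in $R\subseteq\P$. After a dihedral symmetry, I may assume the two polygon edges at $v$ go westward along $H_{y_0}$ to a cycle vertex $(x',y_0)$ with $x'<x_0$, and southward along $V_{x_0}$, so that the unique exterior cell at $v$ is the southwest one. Starting from $v$, walk eastward along $y=y_0$: the cells immediately east of $v$ above and below this line are both in $R$, so the first eastward unit edge is internal to $R$, and the walk continues while the adjacent cells on both sides remain in $R$. Let $x^*$ be the first $x$-coordinate at which this fails.

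At $x^*$, both cells $(x^*,y_0)$ and $(x^*,y_0-1)$ must lie outside $R$: if exactly one were outside, the polygon would contain a horizontal boundary edge on $H_{y_0}$ east of $v$ meeting a vertical boundary edge at the polygon vertex $(x^*,y_0)$, so that $H_{y_0}$ would contain three cycle vertices, namely $v$, $(x',y_0)$, and $(x^*,y_0)$, violating primitivity of $\C_\P$. Consequently $\C_\P$ has a vertical boundary edge on $V_{x^*}$ passing through $(x^*,y_0)$ as an interior point of that edge, and in particular $V_{x^*}$ is one of the maximal vertical intervals appearing in $\C$. The point $(x^*,y_0)$ is a corner of a cell of $R\subseteq\P$, so it lies in $V(\P)$, and since it lies on both $V_{x^*}$ and $H_{y_0}$ we conclude $\{v_{x^*},h_{y_0}\}\in E(G(\P))$. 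Finally $x^*>x_0>x'$, so this edge differs from both cycle edges incident to $h_{y_0}$, producing the required chord.

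The main obstacle will be the reflex-corner bookkeeping: checking that the eastward walk really stays in $R$, using primitivity of $\C_\P$ to exclude the case of a horizontal exit, and verifying that the chord produced is not already one of the two cycle edges at $h_{y_0}$. The remaining ingredients—the Jordan-type crossing argument used to get $R\subseteq\P$, and the existence of a reflex corner on any non-rectangular axis-parallel simple polygon—are standard.
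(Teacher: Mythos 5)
Your proof is correct, and its first half --- reducing to a cycle $\C_\P$ without self crossing and then showing, via the simplicity of $\P$ and a Jordan-curve crossing argument, that every interior cell of $\C_\P$ lies in $\P$ --- is essentially identical to the paper's (your version of the crossing argument, locating a unit edge of $\C_\P$ shared by two consecutive cells of the escaping path, is in fact a bit more precise than the paper's one-line dismissal). Where you genuinely diverge is in how the chord is then produced. The paper fixes one horizontal edge $[a_1,a_2]$ of the cycle, grows the maximal inner interval $\mathcal{I}$ of $\P$ having $a_1,a_2$ as two of its corners, and argues that the maximal horizontal interval $H'$ through the opposite side of $\mathcal{I}$ must meet $V(\C_\P)$, yielding the chord $\{v_{i_1},h'\}$ (with the length-$4$ case absorbed by primitivity when both opposite corners are cycle vertices). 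You instead exploit the fact that a non-rectangular simple rectilinear polygon has a reflex corner $v$, shoot a horizontal ray from $v$ into the interior, and use primitivity to rule out the ray terminating at another corner on the same maximal horizontal interval, so that it must terminate by piercing a vertical edge interval of the cycle, whose maximal vertical interval then supplies the chord. The two constructions find different chords in general; yours trades the paper's appeal to maximality of an inner interval for an explicit case analysis at the termination point $x^*$, which makes the verification that the resulting edge of $G(\P)$ is a bona fide chord (distinct from the two cycle edges at $h_{y_0}$) completely transparent, at the cost of invoking the standard reflex-corner count for rectilinear polygons. Both arguments rest on the same two pillars: the interior of $\C_\P$ lies in $\P$, and primitivity bounds the number of cycle vertices on any maximal interval by two.
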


\begin{proof}
Let $\C$ be a cycle of $G(\P)$ of length $2n$ with $n \geq 3$ and $\C_{\P}$ be the associated primitive cycle in $\P$. We may assume that $\C_{\P}$ does not have any self crossing. Otherwise, by following the definition of self crossing, we know that $\C$ has a chord.

 Let  $\C: v_{i_1},h_{j_1}, v_{i_2}, h_{j_2}, \ldots, v_{i_r}, h_{j_r}$ and $\C_{\P}: V_{i_1}\cap H_{j_1},  V_{i_2}\cap H_{j_1},  V_{i_2}\cap H_{j_2}, \ldots,  V_{i_r}\cap H_{j_r}, V_{i_1}\cap H_{j_r}$.  We may write $a_1= V_{i_1}\cap H_{j_1},  a_2 =V_{i_2}\cap H_{j_1},  a_3=V_{i_2}\cap H_{j_2}, \ldots,  a_{2r-1}=V_{i_r}\cap H_{j_r}, a_{2r}= V_{i_1}\cap H_{j_r}$. Also, we may assume that $a_1$ and $a_2$ belongs to the same maximal horizontal edge interval. Then $a_{2r}$ and $a_1$ belongs to  the same maximal vertical edge interval.

First, we show that every interior cell of $\C_\P$  belongs to $\P$. Suppose that we have an interior cell $C$ of $\C_{\P}$ which does not belong to $\P$. Let $\mathcal{J}$ be any interval such that $\P \subset \mathcal{J}$. Then, by using the definition of simple polyomino, we obtain a path of cells $C=C_1, C_2, \ldots, C_t$ with $C_i \notin P$, $i=1, \ldots t$ and $C_t$ is a boundary cell in $\mathcal{J}$. It shows that $V(C_1) \cup V(C_2) \cup \ldots \cup V( C_t)$ intersects atleast one of $[a_i, a_{i+1}]$ for $i\in \{1, \ldots, r-1\}$ or $[a_r, a_1]$, which is not possible because $\C_{\P}$ is a cycle in $\P$. Hence $C \in \P$. It shows that an interval in interior of $\C_{\P}$ is an inner interval of $\P$.

Let $\mathcal{I}$ be the maximal inner interval of $\C_\P$ to which $a_1$ and $a_2$ belongs and let $b,c$ the 
corner vertices of $\mathcal{I}$. We may assume that  $a_1$ and $c$ are the diagonal corner and $a_2$ and $b$ are the anti-diagonal corner of $\mathcal{I}$. If $b,c \in V(\C_{\P})$ then primitivity of  $\C$  implies that $\C$ is a cycle of length 4. We may assume that $b \notin V(\C_{\P})$. Let $H'$ be the maximal horizontal edge interval which contains $b$ and $c$. The maximality of $\mathcal{I}$ implies that $H' \cap V(\C_{\P}) \neq \emptyset$. For example, see  Figure~\ref{something2}. Therefore, $\{v_{i_1}, h'\}$ is a chord in $\C$, as desired.

\begin{figure}[htbp]
\includegraphics[width = 3cm]{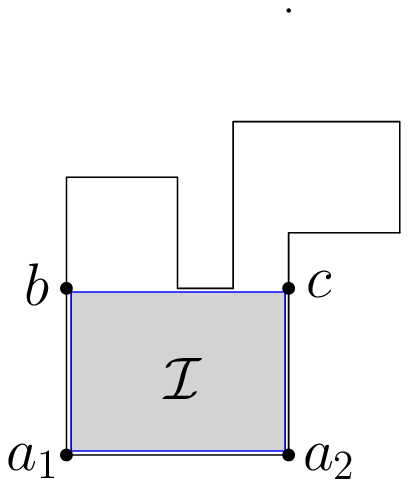}
\caption{maximal inner interval}\label{something2}
\end{figure}

\end{proof}

\begin{Theorem}
Let $\P$ be a simple polyomino. Then $I_\P = J_\P$.
\end{Theorem}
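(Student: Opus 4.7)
The plan is to prove both inclusions, $I_\P \subseteq J_\P$ and $J_\P \subseteq I_\P$, separately.

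First I would dispatch the inclusion $I_\P \subseteq J_\P$. This direction is essentially formal: an inner minor $x_{ij}x_{kl}-x_{il}x_{kj}$ of $\P$ is determined by four corner vertices $(i,j), (k,j), (k,l), (i,l)$ that lie pairwise on two maximal vertical edge intervals $V_p, V_{p'}$ and two maximal horizontal edge intervals $H_q, H_{q'}$. Consequently the sequence $v_p, h_q, v_{p'}, h_{q'}$ is a $4$-cycle $\C$ in $G(\P)$, and by the construction of the binomial $f_\C$ one has $f_\C = x_{ij}x_{kl}-x_{il}x_{kj}$. Since $f_\C \in \ker\varphi = J_\P$, the inclusion follows.

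For the harder inclusion $J_\P \subseteq I_\P$, I would use that $J_\P$ is generated by the binomials $f_\C$ attached to (primitive) cycles in $G(\P)$, and argue by induction on the cycle length $2r$. The base case $r=2$ is where simplicity does all the work. A $4$-cycle in $G(\P)$ corresponds to a primitive $4$-cycle $\C_\P$ in $\P$ which is automatically free of self-crossings, and its interior is a rectangle. By the argument already carried out in the preceding lemma, every interior cell of $\C_\P$ must belong to $\P$, for otherwise simplicity would produce a path of non-$\P$ cells from such an interior cell to the boundary of an enclosing interval, crossing one of the edges of $\C_\P$, a contradiction. Thus the rectangle is an inner interval and $f_\C$ is an inner $2$-minor, so $f_\C \in I_\P$.

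For the inductive step $r \geq 3$, the lemma tells us $G(\P)$ is weakly chordal, so $\C$ has a chord (in the self-crossing case this is already observed in the lemma proof; otherwise it is the length-$\geq 6$ weak-chordal conclusion). A chord $\{v_{i_a}, h_{j_b}\}$ splits $\C$ into two cycles $\C_1, \C_2$ of strictly smaller length sharing the edge variable $x_{V_{i_a}\cap H_{j_b}}$. A direct computation (of the type $x_{V_{i_3}\cap H_{j_3}}\, f_{\C_1} + x_{V_{i_2}\cap H_{j_1}}\, f_{\C_2} = f_\C$ in the length-$6$ model) shows that $f_\C$ is an $S$-linear combination of $f_{\C_1}$ and $f_{\C_2}$ with monomial coefficients, so the inductive hypothesis yields $f_\C \in I_\P$.

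The main obstacle is the base case, not the inductive reduction: everything hinges on upgrading a $4$-cycle in the bipartite graph to an inner interval of $\P$, and this is exactly where the definition of simple polyomino is used (via the very same argument that already appears in the lemma). Once that is in hand, the weakly chordal property proved in the lemma drives the induction cleanly, the algebraic chord identity being a routine telescoping of the two shorter cycle binomials.
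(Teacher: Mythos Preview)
Your proof is correct, and the base case (the $4$-cycle $\Rightarrow$ inner interval argument via simplicity) as well as the inclusion $I_\P\subseteq J_\P$ match the paper exactly. The one genuine difference is in how you handle $J_\P\subseteq I_\P$ for longer cycles: the paper does not run an induction at all, but instead invokes the result of Ohsugi--Hibi \cite{OH1,OH2} that the toric ideal of a weakly chordal bipartite graph is already \emph{generated} by the quadratic binomials attached to $4$-cycles, so only the base case needs to be checked. Your chord-splitting induction with the telescoping identity is essentially an inline reproof of that generation statement; it makes the argument self-contained at the cost of a little extra work, while the paper's route is shorter but leans on the cited literature.
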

\begin{proof}
First we show that $I_{\P} \subset J_{\P}$. Let $f=x_{ij} x_{kl} - x_{il}x_{kj} \in I_{\P}$. Then there exist maximal vertical edge intervals $V_p$ and $ V_q$ and maximal horizontal edge intervals $H_m$ and $H_n$ of $\P$ such that $(i,j), (i,l) \in V_p$, $(k,j), (k,l) \in V_q$ and $(i,j), (k,j) \in H_m$, $(i,l), (k,l) \in H_n$. It gives that $\phi(x_{ij} x_{kl}) = v_ph_mh_nv_q= \phi (x_{il} x_{kj})$. This shows $f \in J_P$. 

Next, we show that $J_{\P} \subset I_{\P}$. It is known from \cite{OH1} and \cite{OH2} that toric ideal of weakly chordal bipartite graph is minimally generated by quadratic binomials associated with cycles of length 4.  It suffices to show that  $f_{\C} \in I_P$ where $\C$ is a cycle of length 4 in $G(\P)$. 

Let $\Ic$ be an interval such that $\P \subset \I$. Let $\C:h_1, v_1, h_2, v_2$.. Then, $\C_{\P}: a_{11}=H_1 \cap V_1, a_{21}=H_2 \cap V_1$, $a_{22}=H_2 \cap V_2$ and $a_{12}=H_1 \cap V_2$ is the associated cycle in $\P$ which also determine an  interval in $\I$. Let $a_{11}$ and $a_{22}$ be the diagonal corners of this interval. We need to show that $[a_{11}, a_{22}]$ is an inner interval in $\P$. Assume that $[a_{11}, a_{22}]$ is not an inner interval of $\P$, that is, there exist a cell $C \in [a_{11}, a_{22}]$ which does not belong to $\P$. Using the fact that $\P$ is a simple polyomino, we obtain a path of cells $C= C_1, C_2, \ldots, C_r$ with $C_i \notin \P$, $i=1, \ldots, r$ and $C_r$ is a cell in $\I$.
Then, $V(C_1 \cup \ldots \cup C_r) $ intersects atleast one of the maximal intervals $H_1, H_2, V_1, V_2$, say $H_1$, which contradicts the fact that $H_1$ is an interval in $\P$. Hence, $[a_{11}, a_{22}]$ is an inner interval of $\P$ and $f_{\C} \in I_{\P}$.
\end{proof}

\begin{Corollary}
Let $\P$ be a simple polyomino. Then $K[\Pc]$ is Koszul and a normal Cohen--Macaulay domain.
\end{Corollary}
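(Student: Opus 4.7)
The plan is to bundle together the Theorem just proved, the Lemma on weak chordality, and standard facts about toric edge rings of bipartite graphs. By the Theorem, $I_\P = J_\P$, so
\[
K[\Pc] = S/I_\Pc = S/J_\Pc \iso \varphi(S) = K[G(\P)],
\]
the edge ring of the bipartite graph $G(\P)$. Since $K[G(\P)]$ sits inside the polynomial ring $T$, it is automatically a domain, and it is a toric ring (the affine semigroup ring generated by the squarefree monomials $v_ph_q$ with $\{p,q\}\in E(G(\P))$).

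For normality, I would invoke the classical fact that the edge ring of any bipartite graph is normal. This goes back to Simis--Vasconcelos--Villarreal and is usually phrased as: a graph $G$ has a normal edge ring if and only if its collection of cycles satisfies the \emph{odd cycle condition}; for bipartite graphs there are no odd cycles at all, so the criterion is trivially satisfied. Once normality is in hand, Hochster's theorem that a normal affine semigroup ring is Cohen--Macaulay yields the Cohen--Macaulay property of $K[\Pc]$ immediately.

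For Koszulness, I would use the Lemma, which asserts that $G(\P)$ is weakly chordal (and bipartite). By the result of Ohsugi--Hibi cited as \cite{OH1}, the toric ideal of the edge ring of a weakly chordal bipartite graph admits a quadratic Gr\"obner basis with respect to a suitable monomial order. A standard theorem of Fr\"oberg then ensures that a graded $K$-algebra whose defining ideal possesses a quadratic Gr\"obner basis is Koszul, and this applies to $K[G(\P)]=K[\Pc]$.

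There is no real technical obstacle here; the corollary is essentially a catalogue of consequences of the identification $K[\Pc]\iso K[G(\P)]$. The only care needed is to make sure the chain of citations is in place: the identification from the Theorem, the weak chordality from the Lemma, the normality of bipartite edge rings, Hochster's theorem for Cohen--Macaulayness, and the quadratic Gr\"obner basis result of \cite{OH1} together with Fr\"oberg's criterion for Koszulness. Assembling these five ingredients in this order gives the proof in a few lines.
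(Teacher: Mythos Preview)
Your argument is correct and follows the same overall architecture as the paper's proof: invoke the Theorem to identify $I_\P$ with the toric ideal $J_\P$, use the Lemma together with \cite{OH1} to obtain a quadratic Gr\"obner basis and hence Koszulness, and finish with Hochster's theorem for Cohen--Macaulayness. The one genuine difference is the route to normality. You appeal to the structural fact that edge rings of bipartite graphs are normal (via the odd cycle condition of Simis--Vasconcelos--Villarreal), whereas the paper instead uses that the Gr\"obner basis from \cite{OH1} is \emph{squarefree} and then applies Sturmfels' theorem \cite{St} that a toric ring with squarefree initial ideal is normal. Both arguments are valid; yours is slightly more conceptual (it uses only the bipartiteness of $G(\P)$ and would work even without the weak chordality Lemma), while the paper's is more economical in that it extracts normality from the very same Gr\"obner basis already invoked for Koszulness.
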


\begin{proof}
From \cite{OH1}, we know that $J_{\Pc}=I_{\Pc}$ has squarefree quadratic Gr\"obner basis with respect to a suitable monomial order.
Hence $K[\Pc]$ is Koszul. By theorem of Sturmfels \cite{St}, one obtains that $K[\Pc]$ is normal and then following a theorem of Hochster \cite[Theorem 6.3.5]{BH}, we obtain that $K[\Pc]$ is Cohen–-Macaulay.
\end{proof}
\medskip
A polyomino ideal may be prime even if the polyomino is not simple.  The polyomino ideal attached to the polyomino in Figure~\ref{prime} is prime. However, the polyomino ideal attached to the polyomino attached in Figure~\ref{notprime} is not prime. It would be interesting to know a complete characterization of polyominoes whose attached polyomino ideals are prime, but it is not easy to answer. However, as a first step, it is already an  interesting question to classify polyominoes with only ``one hole'' such that their associated polyomino ideal is prime. 
\begin{figure}[htbp]
\includegraphics[width = 3cm]{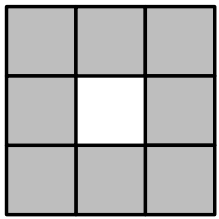}
\caption{polyomino with prime polyomino ideal}\label{prime}
\end{figure}
\begin{figure}[htbp]
\begin{center}
\includegraphics[width=4cm]{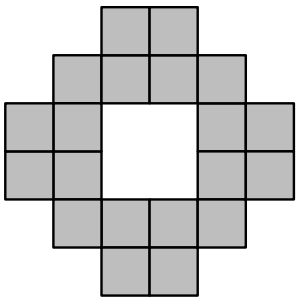}
\end{center}
\caption{polyomino with ``one hole''}\label{notprime}
\end{figure}

\end{document}